\newtheorem{theorem}{Theorem}[section]
\newtheorem{lemma}[theorem]{Lemma}
\newtheorem{proposition}[theorem]{Proposition}
\newtheorem{corollary}[theorem]{Corollary}
\newtheorem{example}[theorem]{Example}
\newenvironment{remark}[1][Remark]{\begin{trivlist}
\item[\hskip \labelsep {\bfseries #1}]}{\end{trivlist}}
\DeclareMathOperator{\Atp}{Atp}
\DeclareMathOperator{\Aut}{Aut}
\newcommand{\ZZ}{\mathbb{Z}}      
\title{About the autotopisms of abelian groups}
\author{Lucien Clavier}
\begin{document}
\maketitle
\tableofcontents

\pagebreak
\section{Introduction}
\label{introduction}

An \textit{isotopism} between two groups $(G,\cdot)$ and $(H,\ast)$, or more generally between two quasigroups (see \cite{Pflu}) is a triple $t=(\alpha, \beta, \gamma)$ of bijections from $G$ to $H$ verifying for each $x,y,z \in G$:
\[
\alpha(x)\ast \beta(y)=\gamma(x\cdot y).
\]
If such a triple exists, $G$ and $H$ are said to be \textit{isotopic}.

We call \textit{isomorphism} an isotopism $t=(\alpha, \beta, \gamma)$ where $\alpha=\beta=\gamma$.

Suppose $G$ and $H$ are finite, and defined on the same set, say $\{1\ldots n\}$. Then we call \textit{principal isotopism} an isotopism $t=(\alpha, \beta, \gamma)$ where $\gamma$ is the identity on $\{1\ldots n\}$. 

As usual, if $G=H$, we say respectively \textit{autotopism, automorphism} instead of isotopism, isomorphism.
We write \textit{$\Atp(G)$} for the set of all autotopisms of $G$ and \textit{$\Aut(G)$} for the set of all automorphisms of $G$. These are groups with respect to the pointwise law of composition 
\[
(\alpha, \beta, \gamma)\cdot (\alpha', \beta', \gamma')=(\alpha \circ \alpha', \beta\circ\beta', \gamma\circ\gamma')
\]
and we note $1$ their neutral element. Of course, we write $h$ instead of $(h,h,h)$ for an element of $\Aut(G)$. 

The reason why the study of isotopisms has not been developed in Group theory is that if any two groups are isotopic, then they must be isomorphic (see \cite{Pflu}). However, it remains a powerful tool in Quasigroup and Loop theory. From \cite{07}, we know explicitly $\Aut(G)$ when $G$ is a finite abelian group. We would like to link $\Atp(G)$ and $\Aut(G)$ in that case. 

Here is a summary of the paper, with $G$ a finite abelian group throughout:

2. We can always decompose any autotopism of $G$ into an automorphism and a \textit{normalized} autotopism, i.e. an autotopism of the form 
\[
\left\{
    \begin{array}{lll}
		\alpha_0 :x &\mapsto x+x_0 \\
		\beta_0 :y &\mapsto y+y_0\\
		\gamma_0 :z & \mapsto z+x_0+y_0
    \end{array}
\right.
\]
for some $x_0, y_0 \in G$.
As a consequence, $\Atp(G)$ is the set $ \Aut(G) \times G^2$ with a nice multiplication making it a semi-direct product.

We extend that result to some other algebras, including groups and $CC$-loops.

3. We show Proposition \ref{p3}, linking $\Atp(G)$, $\Aut(G)$ and $G^2$. 
We investigate some chosen cases: when $\Aut(G)$ is cyclic, and when $G=\mathbb{Z}_{2^r}$, $r>2$. In the special case when $G=\mathbb{Z}_{p}$ with $p$ a prime integer, we show that, up to conjugacy, any subgroup of $\Atp(G)$ is exactly the product of a subgroup of $\Aut(G)$ and a subgroup of $G^2$.

4. We reduce the problem to the one where $G$ is a $p$-group for some prime integer $p$. As a result, we know in detail the subgroup structure of $\Atp(\mathbb{Z}_n)$ for any integer $n$; this is Theorem \ref{zn}.

\pagebreak
\section{$\Atp(G)$ as a semidirect product}
\label{semidir}
\subsection{Abelian groups}

In this section, we prove Proposition \ref{proposition}, which is the key ingredient in all the following sections.
First, for any finite abelian group $G$, for all $h \in \Aut(G)$ and all $X=(x_0,y_0)\in G^2 $, define an autotopism $t_{h,x_0,y_0}$ by
\[
\begin{cases}
	x \mapsto hx+x_0 \\
	y \mapsto hy+y_0\\
	z \mapsto hz+x_0+y_0
\end{cases} 
\]

\begin{proposition}
\label{proposition}
Let $G$ be a finite abelian group. Then
\begin{align*}
\phi: \Aut(G) \ltimes G^2 &\rightarrow \Atp(G) \\
(h,x_0,y_0) &\mapsto t_{h,x_0,y_0}
\end{align*}
is an isomorphism, where the multiplication on $ \Aut(G) \ltimes G^2$ is given by
\[
(h,X)(h',X')=(hh',hX'+X)
\]
\end{proposition}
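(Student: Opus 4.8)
The plan is to show that $\phi$ is a well-defined group homomorphism that is bijective. I will break this into three tasks: verifying that each $t_{h,x_0,y_0}$ is genuinely an autotopism (so that $\phi$ lands in $\Atp(G)$), checking that $\phi$ respects the twisted multiplication on $\Aut(G)\ltimes G^2$, and finally establishing bijectivity.

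First I would confirm that $t_{h,x_0,y_0}\in\Atp(G)$. Writing $\alpha(x)=hx+x_0$, $\beta(y)=hy+y_0$, $\gamma(z)=hz+x_0+y_0$, each map is a bijection of $G$ since $h\in\Aut(G)$ is a bijection and translation is a bijection. The autotopism condition $\alpha(x)+\beta(y)=\gamma(x+y)$ reduces to $hx+x_0+hy+y_0 = h(x+y)+x_0+y_0$, which holds because $h$ is additive; here I use commutativity of $G$ only implicitly through the additive notation, and the computation is immediate.

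Next I would verify the homomorphism property. Composing $t_{h,x_0,y_0}$ after $t_{h',x_0',y_0'}$ in the first coordinate gives $x\mapsto h(h'x+x_0')+x_0 = (hh')x + (hx_0'+x_0)$, and similarly for the other two coordinates. Matching this against the definition of $t_{h'',x_0'',y_0''}$ forces $h''=hh'$ and $(x_0'',y_0'')=(hx_0'+x_0,\,hy_0'+y_0)=h(x_0',y_0')+(x_0,y_0)$, which is exactly the rule $(h,X)(h',X')=(hh',hX'+X)$. So $\phi$ is a homomorphism, and it is the semidirect-product structure that makes the twist $hX'$ appear, matching the action of $\Aut(G)$ on $G^2$ diagonally.

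Finally, bijectivity. Injectivity is clear: if $t_{h,x_0,y_0}$ equals $t_{h',x_0',y_0'}$ as triples of maps, then evaluating the first coordinate at $0$ gives $x_0=x_0'$, at a generator gives $h=h'$, and likewise $y_0=y_0'$, so the triple $(h,x_0,y_0)$ is determined. For surjectivity I would invoke the decomposition result announced in Section~2 of the excerpt: every autotopism of $G$ splits as an automorphism composed with a normalized autotopism, and that decomposition is precisely the assertion that an arbitrary autotopism has the form $t_{h,x_0,y_0}$. The main obstacle is surjectivity, i.e.\ proving that \emph{every} autotopism is of this shape rather than merely that these maps are autotopisms; I expect to argue directly that if $(\alpha,\beta,\gamma)$ is any autotopism, then setting $x_0=\alpha(0)$, $y_0=\beta(0)$ and $h(x)=\alpha(x)-x_0$ recovers an automorphism $h$ and matches $\beta,\gamma$ through the defining relation, using $\gamma(x+y)=\alpha(x)+\beta(y)$ evaluated at suitable arguments to pin down $\beta$ and $\gamma$ in terms of $h,x_0,y_0$.
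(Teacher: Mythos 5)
Your proposal is correct, but your surjectivity argument takes a genuinely different (and more elementary) route than the paper's. The paper proves surjectivity by constructing the principal isotope: it defines a loop $L$ so that $\gamma$ is an isomorphism $G \to L$, computes the multiplication of $L$ to be $x \ast y = x + y - x_0 - y_0$, observes that the translation $h_0 : x \mapsto x + x_0 + y_0$ is an isomorphism $G \to L$, and then writes $t$ as a composition involving $h = h_0^{-1}\gamma$. Your approach skips the auxiliary loop entirely: setting $x_0 = \alpha(0)$, $y_0 = \beta(0)$, the substitutions $y=0$ and $x=0$ in $\alpha(x)+\beta(y)=\gamma(x+y)$ give $\gamma = \alpha + y_0$ and $\beta = \gamma - x_0$, and then $h(x) = \alpha(x) - x_0$ satisfies $h(x)+h(y) = h(x+y)$ by a one-line computation, so $h \in \Aut(G)$ and $(\alpha,\beta,\gamma) = t_{h,x_0,y_0}$. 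This is shorter and self-contained; what the paper's isotope framing buys is the generalization in its Section 2.2, where the same skeleton (principal isotope plus an isomorphism $h_0$, there given by $x \mapsto \eta(x,x_0,y_0)$ instead of a translation) yields the analogous result for the varieties $V(\eta)$, including groups and $CC$-loops. One caution: your first instinct, to ``invoke the decomposition result announced in Section~2,'' would be circular, since that announced decomposition \emph{is} this proposition; fortunately you then pivot to the direct argument, which is the one you should write out, including the explicit additivity check for $h$.
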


\begin{proof}
It is straightforward to check that $\phi$ is a group homomorphism, and that it is injective. Let us prove surjectivity.

Let $t=(\alpha, \beta, \gamma)$ be an autotopism of $G$. Define the loop $(L,\ast)$ on $\{1, \ldots, |G| \}$ so that $\gamma$ is an isomorphism between $G$ and $L$. Then
\[
\overline{t}=(\overline{\alpha},\overline{\beta},\overline{\gamma})=(\alpha\circ\gamma^{-1},\beta \circ\gamma^{-1},\text{Id}) 
\]
is a principal isotopism from $L$ to $G$. This is to say that for each $x,y \in L$
\[
\overline{\alpha}(x)+\overline{\beta}(y)=x\ast y
\]
We are interested in understanding the multiplication on $L$.
But, setting $x_0=\overline{\alpha}(e)$ and $y_0=\overline{\beta}(e)$, where $e$ is the neutral element of $L$, we get
\[
\begin{cases}
\overline{\alpha}(x)+y_0=x\ast e=x \\
x_0+\overline{\beta}(y)=e\ast y =y
\end{cases}
\]
Thus $\ast$ is simply defined by
\[
x\ast y=x+y-x_0-y_0
\]
Therefore, $G$ is also isomorphic to $L$ via $h_0 : x \mapsto x+x_0+y_0$. 
Define $h=h_0^{-1} \gamma \in \Aut(G)$.

\begin{diagram}
G &\rTo^{h_0} &L\\
\uTo^{h} &\ruTo^{\gamma} &\dTo_{\overline{t}}\\
G &\rTo^{t} &G
\end{diagram}

Now, by composition, we can write $t$ as
\[
\left\{
    \begin{array}{lll}
		\alpha :x &\mapsto hx+x_0 \\
		\beta :y &\mapsto hy+y_0\\
		\gamma :z & \mapsto hz+x_0+y_0
    \end{array}
\right.
\]
and we are done.

\end{proof}

\subsection{Other varieties}
Before investigating in more details the case of abelian groups, let us generalize this construction to broader varieties. 
This subsection is not needed for the rest of the article, and is mostly designed for quasigroup theorists.

Adopting the universal-algebraic viewpoint, recall that a \textit{quasigroup} is an algebra 
\[
<Q,\cdot, \backslash, /> 
\]
satisfying the two following pairs of identities:
\[
\begin{cases}
Q_1:\; x\backslash (x \cdot y) = y;\; (x  \cdot y)/y = x \\
Q_2:\; x \cdot (x\backslash y) = y;\; (x/y) \cdot y = x 
\end{cases}
\]

A \textit{loop} is a quasigroup with a neutral element, i.e. an algebra $<Q,\cdot, \backslash, /,1>$ which satisfies $Q_1$, $Q_2$ and 
\[
Q_3:\; x \cdot 1 = 1 \cdot x = x
\]

Instinctively, loops are ``non-necessary associative groups''.
In quasigroup theory, we are often interested in subvarieties of loops ``close to being groups'' (for instance Bol-Moufang loops, see \cite{Petr2}).

Let $\eta=\eta(x,x_0,y_0)$ be an expression in which $x$ appears exactly once (e.g. $\eta(x,x_0,y_0)=x_0\cdot xy_0$ or $\eta(x,x_0,y_0)=x_0y_0\cdot(xy_0)/x_0$).

Define $i(\eta)$ to be the identity 
\[
i(\eta):\; \eta(x,x_0,y_0)/y_0 \cdot x_0\backslash \eta(y,x_0,y_0)=\eta(xy,x_0,y_0)
\]

Then we can generalize Proposition \ref{proposition} to the variety $V(\eta)$ defined by $Q_1$, $Q_2$, $Q_3$ and $i(\eta)$.

\begin{proposition}
\label{p2}
Let $G$ be a loop in $V(\eta)$. Then the application $\Phi$ from $\Aut(G)\times G^2$ to $\Atp(G)$ defined by
\[ 
\Phi(h,x_0,y_0) = \begin{cases}
	x \mapsto \eta(h(x),x_0,y_0)/y_0 \\
	y \mapsto x_0\backslash \eta(h(y),x_0,y_0) \\
	z \mapsto \eta(h(z),x_0,y_0)
\end{cases}
\]
is a bijection.

Equipping $\Aut(G)\times G^2$ with the multiplication
\[
 (h_0,x_0,y_0)(h_1,x_1,y_1)=(h_2,x_2,y_2)
\]
where
\[
 \begin{cases}
  \varphi_0:x\mapsto\eta(h_0(x),x_0,y_0)\\
  \varphi_1:x\mapsto\eta(h_1(x),x_1,y_1)\\
  x_2=\varphi_0(x_1)/y_0\\
  y_2=x_0\backslash \varphi_0(y_1)\\
  \varphi_2:x\mapsto\eta(x,x_2,y_2)\\
  h_2=\varphi_2^{-1}\circ \varphi_0 \circ \varphi_1
 \end{cases}
\]
$\Phi$ becomes an isomorphism.

Moreover, $\Phi^{-1}$ is simply given by 
\[
 \Phi^{-1}(\alpha,\beta,\gamma)=(h_0^{-1}\circ\gamma,\alpha(1),\beta(1))
\]
where $h_0$ is defined by
\[
 h_0(x)=\eta(x,\alpha(1),\beta(1))
\]
\end{proposition}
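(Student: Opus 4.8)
The plan is to mimic the structure of the proof of Proposition \ref{proposition}, but working entirely in the equational language of the variety $V(\eta)$ rather than relying on the special additive form of abelian groups. The single key property we are allowed to use is the identity $i(\eta)$, which asserts
\[
\eta(x,x_0,y_0)/y_0 \cdot x_0\backslash \eta(y,x_0,y_0)=\eta(xy,x_0,y_0).
\]
Reading this identity with the eyes of the map $\Phi$, it says exactly that the triple $\Phi(h,x_0,y_0)$, with $h=\id$, already satisfies the autotopism condition $\alpha(x)\cdot\beta(y)=\gamma(xy)$; composing each component with an automorphism $h$ preserves this, so the first task is to verify that $\Phi(h,x_0,y_0)$ is a genuine autotopism. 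I would then check that each of the three components is a bijection: this follows because $u\mapsto\eta(u,x_0,y_0)$ is a bijection (since $x$ occurs exactly once in $\eta$, this map is a composition of the elementary bijections built from the quasigroup operations $\cdot,\backslash,/$ with the fixed parameters $x_0,y_0$), and the extra divisions $/y_0$ and $x_0\backslash(\cdot)$ are bijections by $Q_1,Q_2$.

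Next I would establish that $\Phi$ is a bijection by exhibiting the stated inverse. Given an arbitrary autotopism $(\alpha,\beta,\gamma)$, set $x_0=\alpha(1)$, $y_0=\beta(1)$, and $h_0(x)=\eta(x,x_0,y_0)$, so that $h_0$ is the same elementary bijection as above. Plugging $y=1$ and then $x=1$ into the autotopism identity $\alpha(x)\cdot\beta(y)=\gamma(xy)$ and using $Q_3$ pins down $\alpha$ and $\beta$ in terms of $\gamma$ (this is the exact analogue of the computation $\overline\alpha(x)+y_0=x$ in the abelian proof). The candidate preimage is $(h_0^{-1}\circ\gamma,\;x_0,\;y_0)$; I would confirm that applying $\Phi$ to it returns $(\alpha,\beta,\gamma)$, the $\gamma$-component being immediate from $\eta(h_0^{-1}\gamma(z),x_0,y_0)=h_0(h_0^{-1}\gamma(z))=\gamma(z)$, and the $\alpha,\beta$-components following from the relations extracted from $Q_3$. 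This simultaneously proves surjectivity, injectivity, and the explicit formula for $\Phi^{-1}$.

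Finally I would verify that $\Phi$ is a homomorphism for the stated multiplication. Here the cleanest route is to compose the two autotopisms $\Phi(h_0,x_0,y_0)$ and $\Phi(h_1,x_1,y_1)$ componentwise in $\Atp(G)$ and then read off, via the already-established formula for $\Phi^{-1}$, which triple $(h_2,x_2,y_2)$ the product corresponds to. Writing $\varphi_0,\varphi_1$ for the $\gamma$-components (so $\varphi_i(x)=\eta(h_i(x),x_i,y_i)$), the $\gamma$-component of the product is $\varphi_0\circ\varphi_1$; feeding this composite triple into $\Phi^{-1}$ yields precisely $x_2=\varphi_0(x_1)/y_0$, $y_2=x_0\backslash\varphi_0(y_1)$, and then $h_2=\varphi_2^{-1}\circ\varphi_0\circ\varphi_1$ with $\varphi_2(x)=\eta(x,x_2,y_2)$, which is exactly the multiplication in the statement. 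The main obstacle I anticipate is the bookkeeping in this last step: unlike the abelian case, $\eta$ need not interact with the quasigroup operations in any commutative or cancellative way beyond $Q_1,Q_2,Q_3$ and $i(\eta)$, so I must be careful to manipulate the composite $\alpha,\beta$-components using only these identities and to track how the parameters $x_0,y_0$ and $x_1,y_1$ combine through $\varphi_0$. Verifying that the $x_2,y_2$ produced by composition genuinely agree with $\Phi^{-1}$ of the product, rather than with some superficially similar expression, is where the real care is needed.
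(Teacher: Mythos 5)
Your plan follows the same skeleton as the paper's proof (both mimic Proposition \ref{proposition}: set $x_0=\alpha(1)$, $y_0=\beta(1)$, $h_0(x)=\eta(x,x_0,y_0)$, take $h=h_0^{-1}\circ\gamma$, and recover the multiplication law by feeding the composite autotopism into $\Phi^{-1}$), but your surjectivity step has a genuine gap: you never show that $h_0^{-1}\circ\gamma$ is an \emph{automorphism} of $G$, only that it is a bijection. This matters because the candidate preimage $(h_0^{-1}\circ\gamma,x_0,y_0)$ must lie in the domain $\Aut(G)\times G^2$. Note that your verification that $\Phi(h_0^{-1}\circ\gamma,x_0,y_0)=(\alpha,\beta,\gamma)$ uses only the loop axioms $Q_1$--$Q_3$ and never invokes the hypothesis $i(\eta)$; consequently, as written, the same argument would ``prove'' that every autotopism of an \emph{arbitrary} loop has the stated form with $h$ a bijection --- which is true but contentless, and certainly cannot yield the proposition, whose conclusion (via Corollary \ref{corol}, forcing $|\Atp(G)|=|G|^2|\Aut(G)|$) fails for loops outside such varieties. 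The missing deduction is precisely where $i(\eta)$ does its work, and it is the point of the paper's construction of the isotope loop $L$: the identity $i(\eta)$ says exactly that $h_0$ is an isomorphism from $G$ onto the loop $L$ whose multiplication is $a\ast b=(a/y_0)\cdot(x_0\backslash b)$, while substituting your $Q_3$-relations $\alpha(x)=\gamma(x)/y_0$ and $\beta(y)=x_0\backslash\gamma(y)$ back into $\alpha(x)\cdot\beta(y)=\gamma(xy)$ shows that $\gamma$ is \emph{also} an isomorphism from $G$ onto $L$; hence $h=h_0^{-1}\circ\gamma\in\Aut(G)$. You have all the ingredients on the page, but without this step the proof does not go through.

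A second, smaller defect: checking $\Phi\circ\Psi=\id$ on $\Atp(G)$ (where $\Psi$ is your candidate inverse) proves surjectivity of $\Phi$ and injectivity of $\Psi$, but does \emph{not} ``simultaneously'' prove injectivity of $\Phi$. For that you need either a direct argument (from $\Phi(h,x_0,y_0)=\Phi(h',x_0',y_0')$, quasigroup cancellation forces $y_0=y_0'$, $x_0=x_0'$, then $h=h'$), or $\Psi\circ\Phi=\id$, which requires knowing that the $\alpha$- and $\beta$-components of $\Phi(h,x_0,y_0)$ take the values $x_0$ and $y_0$ at $1$. That last fact rests on the derived identity $\eta(1,u,v)=uv$, which the paper records explicitly ($h_0(1)=e=x_0y_0$, since $h_0$ is an isomorphism onto $L$) and which your proposal never establishes; the same identity is silently used in your homomorphism step, when you claim that $\Phi^{-1}$ applied to the composite gives $x_2=\varphi_0(x_1)/y_0$ (this needs $\alpha_1(1)=x_1$). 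Both repairs are short, but they are needed, and the first one is where the whole content of the hypothesis $i(\eta)$ lives.
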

\begin{proof}
We mimic the proof of Proposition \ref{proposition}. 

Let $t=(\alpha, \beta, \gamma)$ be an autotopism of $G$.

Define the loop $(L,\ast)$ on $\{1, \ldots, |G| \}$ so that $\gamma$ is an isomorphism between $G$ and $L$.

Define 
\[
\overline{t}=(\overline{\alpha},\overline{\beta},\overline{\gamma})=(\alpha\circ\gamma^{-1},\beta \circ\gamma^{-1},\text{Id}) 
\]

Now, for each $x,y \in L$
\[
\overline{\alpha}(x) \cdot \overline{\beta}(y)=x\ast y
\]
so, setting $x_0=\overline{\alpha}(e)$ and $y_0=\overline{\beta}(e)$, where $e$ is the neutral element of $L$, we get
\[
\begin{cases}
\overline{\alpha}(x)=x/y_0 \\
\overline{\beta}(y)=x_0\backslash y 
\end{cases}
\]
and finally
\[
x\ast y=x/y_0\cdot x_0\backslash y
\]

We say that $L$ is the \textit{$(x_0,y_0)$-isotope} of $G$. 

From the hypothesis, $G$ is isomorphic to $L$ via 
\[
h_0 : x \mapsto \eta(x,x_0,y_0) 
\]
which is a bijection since $x$ appears exactly once in $\eta$. Define $h=h_0^{-1} \circ \gamma \in \Aut(G)$.
By composition, we can write $t=\overline{t}\circ h_0\circ h$ as
\[ 
\Phi(h,x_0,y_0) = \begin{cases}
	x \mapsto h_0\circ h(x)/y_0 \\
	y \mapsto x_0\backslash h_0\circ h(y) \\
	z \mapsto h_0\circ h(z)
\end{cases}
\]
and we are done for the first part.

The rest of the proof is straightforward, noticing that since $h_0$ is an isomorphism, 
\[
h_0(1)=e=x_0y_0
\]
i.e. $V(\eta)$ satisfies the identity
\[
 \eta(1,u,v)=uv
\]
\end{proof}

The following Corollary is a direct consequence of the previous discussion.
\begin{corollary}
\label{corol}
For every loop $G$ in a variety $V(\eta)$ with $\eta$ as in Proposition \ref{p2},
\[ |\Atp(G)|= |G|^2 |\Aut(G)|\]
\end{corollary}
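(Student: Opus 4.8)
The plan is to read off the corollary directly from the bijectivity statement already established in Proposition~\ref{p2}. That proposition constructs a map
\[
\Phi: \Aut(G) \times G^2 \to \Atp(G)
\]
and proves (in its first part, independently of the group structure placed on the domain) that $\Phi$ is a bijection. Since any bijection forces its domain and codomain to have the same cardinality, the entire task reduces to computing the cardinality of the domain $\Aut(G) \times G^2$.

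First I would note that the domain is the Cartesian product $\Aut(G) \times G \times G$, so by the product rule for cardinalities one has
\[
|\Aut(G) \times G^2| = |\Aut(G)| \cdot |G| \cdot |G| = |G|^2 \, |\Aut(G)|.
\]
Combining this with the equality $|\Atp(G)| = |\Aut(G) \times G^2|$, which is exactly what bijectivity of $\Phi$ gives, yields the claimed identity. No further structure of $V(\eta)$, of $\eta$, or of the explicit formulas for $\Phi$ is needed beyond the fact that $\Phi$ is one-to-one and onto.

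Honestly there is no real obstacle: the whole content of the corollary has already been absorbed into the construction and bijectivity of $\Phi$, so the argument is a two-line counting step. The only point worth flagging is the implicit finiteness hypothesis. For the statement to read as an ordinary numerical equation one wants $G$, and hence $\Aut(G)$, to be finite, consistent with the running setting of the paper; if $G$ is infinite the same bijection still gives the identity, but it must then be interpreted as an equation of cardinals, where the product rule continues to hold.
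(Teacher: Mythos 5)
Your proof is correct and matches the paper's approach exactly: the paper also derives the corollary as a direct consequence of the bijectivity of $\Phi$ established in Proposition~\ref{p2}, with the cardinality count $|\Aut(G)\times G^2| = |G|^2\,|\Aut(G)|$ left implicit. Your remark about finiteness is a reasonable clarification consistent with the paper's running assumption that loops are defined on finite sets.
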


\begin{remark}
At this point, we could raise the question of which varieties of loops can be written $V(\eta)$ for some $\eta$ as above.
In fact, any $V(\eta)$ must be a variety consisting of $G$-loops only, since every loop in $V(\eta)$ is isomorphic to each of its $(x_0,y_0)$-isotopes (see \cite{Pflu}). 

For the particular choice $\eta=x_0x\cdot y_0$, we get 
\[
i(\eta): x_0x\cdot x_0\backslash (x_0y \cdot y_0)=(x_0\cdot xy)y_0
\]
and taking $x_0=1$ yields
\[
i'(\eta): x\cdot yy_0=xy\cdot y_0
\]
thus $V(\eta)$ reduces to the variety of groups.
Then the multiplication is simply
\[
(h,x_0,y_0)(h',x_1,y_1)=(hh', x_0h(x_1),x_0^{-1}h(y_1)x_0y_0)
\]
and if $G$ is abelian, we find of course the semiproduct law of Proposition \ref{proposition}.

Moreover, we can find an $\eta$ such that $V(\eta)$ is the variety $CCL$ of \textit{CC-loops}, which is the most natural variety consisting of $G$-loops only and bigger than the variety of groups (see \cite{CCL}). Indeed, from the proof of Lemma 2.12 and the discussion leading to Lemma 2.13 in \cite{CCL}, we see that any $CC$-loop verifies $i(\eta_{CCL})$ for $\eta_{CCL}=y_0(x\cdot y_0\backslash x_0 y_0 )$; but conversely, plugging $x_0=1$ or $y_0=1$ in $i(\eta_{CCL})$ shows that any loop in $V(\eta_{CCL})$ is a $CC$-loop, thus
\[
V(\eta_{CCL})=CCL
\]

Details are left to the reader.
\end{remark}

\pagebreak
\section{Subgroup structure of $\Atp(G)$}
\label{struct}

Proposition \ref{proposition} states that $\text{Atp}(G)$ is isomorphic to $\text{Aut}(G) \ltimes G^2$; we are now interested in its subgroup structure.

\begin{proposition}
\label{p3}
Every subgroup of $\text{Aut}(G)\ltimes G^2$ is of the form
\[
({1}\ltimes Q)\cdot <(h_1,X_1),\ldots ,(h_r,X_r)>
\]
where $Q$ is a subgroup of $G^2$, $X_1\ldots X_r\in G^2$ and $\{h_1, \ldots, h_r \}$ is a (minimal) set of generators for some subgroup $H$ of $\text{Aut}(G)$.
Conversely, these are always subgroups of $\text{Aut}(G)\ltimes G^2$.
\end{proposition}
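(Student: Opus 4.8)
The plan is to use the split extension structure of $\Gamma := \Aut(G)\ltimes G^2$. Write $N := 1\ltimes G^2$ for the normal subgroup of ``translations'' and let $\pi\colon \Gamma \to \Aut(G)$, $(h,X)\mapsto h$, be the projection; it is a homomorphism with $\Ker \pi = N$. The subgroup $Q$ will be recovered from the kernel, and the $h_i$ from the image, of $\pi$ restricted to the subgroup at hand.

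For the forward direction, let $S\le\Gamma$. First I would set $1\ltimes Q := S\cap N$, so that $Q$ is a subgroup of $G^2$, and put $H := \pi(S)\le\Aut(G)$. Choosing a (minimal) generating set $\{h_1,\ldots,h_r\}$ of $H$ and lifting each $h_i\in\pi(S)$ to some $(h_i,X_i)\in S$, set $T := \langle(h_1,X_1),\ldots,(h_r,X_r)\rangle\le S$. Since $1\ltimes Q$ and $T$ both lie in $S$, we get $(1\ltimes Q)\cdot T\subseteq S$. Conversely, given $(h,X)\in S$, the element $h\in H$ is a word in the $h_i$; reading that word in the lifts $(h_i,X_i)$ produces some $(h,X')\in T$ with the same first coordinate, because $\pi$ is a homomorphism. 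Then
\[
(h,X)(h,X')^{-1}=(1,X-X')\in S\cap N = 1\ltimes Q,
\]
so $X-X'\in Q$ and $(h,X)=(1,X-X')\cdot(h,X')\in(1\ltimes Q)\cdot T$. This yields $S=(1\ltimes Q)\cdot T$, the desired form.

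The heart of both directions is the single conjugation identity
\[
(h,X)(1,q)(h,X)^{-1}=(1,hq),\qquad q\in G^2,
\]
which is immediate from the multiplication law. In the forward direction it shows that the $Q$ produced above is automatically $H$-invariant, since $1\ltimes Q=S\cap N$ is normal in $S$ and $\pi(S)=H$. For the converse I would run the same identity in reverse: it shows that $T$ normalizes $1\ltimes Q$ precisely when $h_iQ\subseteq Q$ for each $i$ (finiteness of $G$ upgrading this to $hQ=Q$ for all $h\in H$), and once $1\ltimes Q$ is $T$-normalized the product $(1\ltimes Q)\cdot T=T\cdot(1\ltimes Q)$ is a subgroup. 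I expect this normalization step to be the main obstacle, and the point to watch is that the converse is only correct for $Q$ stable under $H=\langle h_1,\ldots,h_r\rangle$ --- a hypothesis that is forced in the forward direction and that I would make explicit in the statement.
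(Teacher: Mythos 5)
Your forward direction is correct and is essentially the paper's own argument: the paper defines $H(S)$ (your $\pi(S)$) and the fibers $Q_h(S)=\{X;\;(h,X)\in S\}$, shows that $Q_1(S)$ (your $S\cap N$) is a subgroup whose cosets are exactly these fibers, and then rewrites $S$ as $(1\ltimes Q_1(S))\cdot\langle(h_1,X_{h_1}),\ldots,(h_r,X_{h_r})\rangle$; your ``read the word in the lifts'' step is the same computation $(h,X)(h,X')^{-1}=(1,X-X')$ packaged through the projection homomorphism.

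Where you genuinely depart from the paper is the converse, and your instinct is right. The paper simply asserts that $(1\ltimes Q)\cdot\langle(h_1,X_1),\ldots,(h_r,X_r)\rangle$ is always a subgroup, with no hypothesis on $Q$, and that assertion is false for general abelian $G$. Take $G=\mathbb{Z}_p\times\mathbb{Z}_p$, let $Q=\langle(v,0)\rangle\le G^2$ with $v\neq0$, and pick $h\in\Aut(G)$ with $hv\notin\langle v\rangle$; then
\[
(1\ltimes Q)\cdot\langle(h,0)\rangle=\{(h^k,q);\;k\in\mathbb{Z},\,q\in Q\}
\]
is not closed, since $(h,q)(h,q')=(h^2,hq'+q)$ and $hq'\notin Q$. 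Exactly as you say, the hypothesis to add is stability of $Q$ under $H=\langle h_1,\ldots,h_r\rangle$ (equivalently $h_iQ\subseteq Q$ for each $i$, which by finiteness gives $hQ=Q$ for all $h\in H$); this makes $1\ltimes Q$ normal in the product, hence the product a subgroup, and it is automatic in the forward direction because $1\ltimes Q=S\cap N$ is normal in $S$, via $(h,X)(1,q)(h,X)^{-1}=(1,hq)$. One small precision: $H$-stability is the right \emph{sufficient} hypothesis (and the one the forward direction always produces), but it is not literally necessary for the product set to happen to be a subgroup, so state the converse with the stability hypothesis rather than as an equivalence. Note also that the paper's gap is harmless for its later applications: there $G$ is always cyclic ($\mathbb{Z}_p$, $\mathbb{Z}_{2^r}$, $\mathbb{Z}_n$), every automorphism is multiplication by an integer unit, and multiplication by an integer maps any subgroup of $G^2$ into itself, so stability holds for free; but for the proposition as stated, your amended version is the correct one.
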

\begin{proof}
Let $S$ be a subgroup of $\text{Aut}(G)\ltimes G^2$.

Define 
\[
H(S)=\{h; \; (h,X)\in S \text{ for some } X\in G^2\}
\]

$H(S)$ is clearly a subgroup of $\text{Aut}(G)$. For each $h$ in $H(S)$, define
\[
Q_h(S)=\{X; \; (h,X)\in S\}
\]

Since 
\[(1,X)(1,Y)=(1,X+Y)\]
for all $X$, $Y$ in $G^2$, we notice that $Q_1(S)$ is a subgroup of $G^2$, making $\{1\}\ltimes Q_1(S)$ a normal subgroup of $S$.

Now, fix some $h\in H(S)$; we see that for all $X, Y, Z$ in $G^2$ such that $(h,X)$, $(h,Y),$ $(1,Z)\in S$, we have 
\[(1,Z)(h,X)=(h,X+Z)\in S\]
and 
\[(h,X)(h,Y)^{-1} = (h,X)(h^{-1},-h^{-1}Y) = (1,X-Y)\in S \]

Thus, $Q_h(S)$ is always a coset of $Q_1(S)$. Choosing for each $h\in H$ an element $X_h\in Q_h$, we can describe $S$ as
\[
S=\{ (h, X_h+ X);\; h\in H(S),\; X\in Q_1(S) \}
\]
i.e.
\[
 S=({1}\ltimes Q_1(S))\cdot <(h,X_h);\;h\in H(S) > 
\]
but since the $X_h$ can be chosen arbitrarily in $Q_h(S)$, this is also
\[
S={(1}\ltimes Q_1(S))\cdot <(h_1,X_{h_1}),\ldots ,(h_r,X_{h_r})>
\]
where $\{h_1 \ldots h_r\}$ is a (minimal) set of generators for $H(S)$.

Conversely, 
\[
({1}\ltimes Q)\cdot <(h_1,X_1),\ldots ,(h_r,X_r)>
\]
is always a subgroup of $\text{Aut}(G)\ltimes G^2$ and we are done.
\end{proof}

We now investigate in more details some simple cases, starting with the case when $\text{Aut}(G)$ is cyclic. We are motivated by the following lemma:

\begin{lemma}
\label{lang}
Let $p$ be a prime integer and $r>0$. Then $A=\mathbb{Z}_{p^r}^\ast$ is cyclic, except in the case when $p=2$ and $r \geq 3$. In the latter case, $A$ is isomorphic to $\mathbb{Z}_{2}\times \mathbb{Z}_{2^{r-2}}$, where $\mathbb{Z}_{2}$ represents the orbit of $-1$ and $\mathbb{Z}_{2^{r-2}}$ the orbit of $5$ in $A=\mathbb{Z}_{2^r}^\ast$. 
\end{lemma}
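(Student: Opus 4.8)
The plan is to treat the two cases separately and, in each, to exhibit explicit units whose multiplicative orders I can compute directly, then recover the group structure by a counting argument. Throughout, $|A| = p^{r-1}(p-1)$. The unifying technique is a lifting-the-exponent induction: for a fixed base $a$ one shows by induction on $k$ that $a^{p^k}$ (or $a^{2^k}$) agrees with $1$ to a precisely controlled $p$-adic (resp. $2$-adic) order, which pins down the exact order of $a$ modulo $p^r$.

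For odd $p$, I would first use the reduction homomorphism $\mathbb{Z}_{p^r}^\ast \to \mathbb{Z}_p^\ast$, whose target is cyclic since it is the multiplicative group of the field $\mathbb{Z}_p$; lifting a generator of $\mathbb{Z}_p^\ast$ then produces an element of $A$ of order divisible by $p-1$. Next I would show that $1+p$ has order exactly $p^{r-1}$ via the induction $(1+p)^{p^k} \equiv 1 + p^{k+1} \pmod{p^{k+2}}$: taking $k = r-1$ shows the order divides $p^{r-1}$, while $k = r-2$ shows it does not divide $p^{r-2}$. Since $\gcd(p-1,\, p^{r-1}) = 1$, the product of these two cyclic subgroups contains an element of order $(p-1)p^{r-1} = |A|$, so $A$ is cyclic.

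For $p = 2$ and $r \geq 3$, I would instead track $-1$ and $5$. The element $-1$ has order $2$. For $5$, the key induction is $5^{2^k} \equiv 1 + 2^{k+2} \pmod{2^{k+3}}$, with base case $5 = 1 + 2^2$; setting $k = r-2$ yields $5^{2^{r-2}} \equiv 1 \pmod{2^r}$, while $k = r-3$ gives $5^{2^{r-3}} \equiv 1 + 2^{r-1} \not\equiv 1 \pmod{2^r}$, so $5$ has order exactly $2^{r-2}$. Since every power of $5$ is $\equiv 1 \pmod 4$ whereas $-1 \equiv 3 \pmod 4$, we have $\langle -1 \rangle \cap \langle 5 \rangle = \{1\}$, and $|\langle -1 \rangle|\cdot|\langle 5\rangle| = 2 \cdot 2^{r-2} = 2^{r-1} = |A|$. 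Hence $A$ is the internal direct product of these two cyclic groups, identifying $\mathbb{Z}_2$ with the orbit of $-1$ and $\mathbb{Z}_{2^{r-2}}$ with the orbit of $5$; the result is noncyclic precisely because $\gcd(2,\, 2^{r-2}) \neq 1$ when $r \geq 3$.

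The main obstacle is the order computation itself — establishing the exact power of $p$ dividing $a^{p^k} - 1$ through the lifting inductions, and in particular handling the squaring step for $p = 2$: the cross term of order $2^{2k+4}$ must be absorbed into the modulus, which is what forces the exponent $k+3$ rather than $k+2$ and ultimately produces the extra factor of $\mathbb{Z}_2$. Once the orders of the generators are known, the structural conclusions follow from routine counting together with the coprimality (odd case) or non-coprimality (even case) of the factor orders.
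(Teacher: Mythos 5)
Your proof is correct, but it takes a genuinely different route from the paper: the paper does not prove this lemma at all, it simply defers to the literature (``This classical result is supposed well-known. See Lang, Ex.~7 p.~79''), whereas you give a self-contained elementary argument. Your two lifting inductions are the standard and correct ones: $(1+p)^{p^k} \equiv 1 + p^{k+1} \pmod{p^{k+2}}$ (valid for odd $p$ because $\binom{p}{2}$ is then divisible by $p$, which is exactly where $p=2$ breaks) and $5^{2^k} \equiv 1 + 2^{k+2} \pmod{2^{k+3}}$, and the counting arguments that follow (coprime orders give a cyclic product in the odd case; trivial intersection $\langle -1\rangle \cap \langle 5\rangle = \{1\}$ via reduction mod $4$ plus the order count $2 \cdot 2^{r-2} = |A|$ give the internal direct product in the even case) are sound. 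Two small points to tidy up: first, the lift $\bar g$ of a generator of $\mathbb{Z}_p^\ast$ only has order \emph{divisible} by $p-1$, so you should pass to the power $\bar g^{\,p^j}$ (where $\mathrm{ord}(\bar g) = (p-1)p^j$) to get an element of order exactly $p-1$ before invoking coprimality; second, your case split (odd $p$; $p=2$ with $r\geq 3$) silently omits $p=2$ with $r \in \{1,2\}$, where $A$ has order $1$ or $2$ and is trivially cyclic, so a one-line remark closes the statement as given. What your approach buys is self-containedness and an explicit verification that $-1$ and $5$ are the claimed generators (which the paper actually uses later, in Example~\ref{eq2}); what the paper's citation buys is brevity for a result it regards as background.
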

\begin{proof}
This classical result is supposed well-know. See \cite{Lang}, Ex. 7 p. 79.
\end{proof}

In the following, it is convenient to define
\[
s_k(h)=\left\{
    \begin{array}{ll}
    	1+h+\ldots+h^{k-1} \text{ if } h\neq 1\\
    	0 \text{ otherwise}
    \end{array}
\right.
\]
for all integer $k$. 

\begin{example}
\label{neq2}
Let $G$ be an abelian group such that $\text{Aut}(G)$ is cyclic. Then the general form of a subgroup of $\text{Atp}(G)\ltimes G^2$ is
\[
S= \{ (h^k,s_{k}(h)X+Y);\; k \in \{1 \ldots |h|\},\, Y \in Q \}
\]
where $h\in \text{Aut}(G)$, $X\in G^2$ and $Q$ is some subgroup of $G^2$ containing $s_{|h|}(h)X$.
\end{example}
\begin{proof}
Every subgroup of a cyclic group is cyclic. Moreover,

\begin{align*}
  <(h,X)>&=\{(h^k, s_k(h) X);\; k\in \ZZ \} \\
&=\{(h^k, (s_k(h) + m s_{|h|}(h))X);\;  k \in \{1 \ldots |h|\},\, m \in \ZZ \}
\end{align*}

The proof follows by Proposition \ref{p3}
\end{proof}

The following example gives the subgroups of $\ZZ_p$, up to conjugacy.

\begin{example}
\label{zp}
Let $p$ be a prime integer. Then, up to conjugacy, any subgroup of $\text{Atp}(\ZZ_p)$ is the product of a subgroup of $\text{Aut}(\ZZ_p)=\ZZ_p^\ast$ by a subgroup of $\ZZ_p^2$.

More precisely, Table 1 below provides a set of representatives $S$ for conjugacy classes of $\text{Atp}(\ZZ_p)$, together with their normalizer 
\[
N(S)=\{(n,Y);\;(n,Y)S(n,Y)^{-1}=S\}.
\]
\end{example}

Since $\text{Aut}(\ZZ_p)=<m_0>\cong \ZZ_{p-1}$ is cyclic generated by some automorphism $m_0$, we define $H_d$ for each $d$ dividing $p-1$ to be the (unique) subgroup $H_d=<m_0^d>$ of order $(p-1)/d$.

\begin{table}[H]
\begin{center}
\begin{tabular}{|l|l|}
\hline
representative $S$ & $N(S)$ \\ \hline
$\{1\}=1\ltimes {(0,0)}$  & $\text{Atp}(\mathbb{Z}_p)$\\ 
$1\ltimes <(1,y)>$  &    \\ 
$1\ltimes <(0,1)>$  &    \\ \hline
$H_d,\, d\neq p-1$ & $\text{Aut}(\mathbb{Z}_p)$ \\ \hline
$H_d \ltimes<(1,y)>, \, d\neq p-1$ & $\text{Aut}(\mathbb{Z}_p)\ltimes <(1-m_0^d)^{-1}\cdot(1,y) >$  \\  \hline
$H_d \ltimes<(0,1)>, \, d\neq p-1$ & $\text{Aut}(\mathbb{Z}_p)\ltimes <(1-m_0^d)^{-1}\cdot(0,1) > $   \\ \hline
$H_d\ltimes \mathbb{Z}_p^2$  & $\text{Atp}(\mathbb{Z}_p)$   \\ \hline
\end{tabular}
\end{center}
\caption{Representatives for conjugacy classes of $\text{Atp}(\mathbb{Z}_p)$ and their normalizer.}
\end{table}
\begin{proof}
Since
\begin{align*}
(n,Y)(m,X)(n,Y)^{-1}&=(n,Y)(m,X)(n^{-1},-n^{-1}Y)  \\
					&=(m,nX+(1-m)Y)
\end{align*}
the proof is straightforward; it is left to the reader.
\end{proof}

Let us now describe the case $G= \mathbb{Z}_{2^r}$, $r>2$. 
\begin{example}
\label{eq2}
Let $G= \mathbb{Z}_{2^r}$, $r>2$. Let $S$ be a subgroup of $\text{Atp}(G)$. Then either
\[
S= \{ (h^k,s_{k}(h)X+Z);\; k \in \{1 \ldots |h|\},\, Z \in Q \}
\]
where $h\in \text{Aut}(G)$, $X\in G^2$ and $Q$ is some subgroup of $G^2$ containing $s_{|h|}(h)X$, or
\[
S=\{ (-1)^\epsilon h^k,s_k(h)X+ \epsilon h^k Y+Z;\;\epsilon\in\{0,1\}, k\in \{1\ldots \alpha\}, Z\in Q\}
\]
for $h$ some power of 5, $X\in G^2$ and $Q$ some subgroup of $G^2$ containing all the elements
\begin{align*}
\left(s_{\alpha_1}(h)-h^{\alpha_1} s_{\alpha_2}(h)+\ldots+(-1)^{e+1} h^{\alpha_1+\ldots+\alpha_{e-1}}s_{\alpha_e}(h)\right)X  \\
+\left(-1+h^{\alpha_1}-h^{\alpha_1+\alpha_2}+ \ldots + (-1)^{e+1} h^{\alpha_1+\ldots+\alpha_{e}}\right)Y
\end{align*}
where the $0 <\alpha_i<|(h,X)|$ verify $\alpha_1+\ldots +\alpha_e=|(h,X)|$.
\end{example}
\begin{proof}
The subgroups of $\Aut(G)\cong \ZZ_{2}\times \ZZ_{2^{r-2}}$ are either cyclic and the computation of Example \ref{neq2} holds, or it is 
$<(1,0),(0,k)> \subset \ZZ_{2}\times \ZZ_{2^{r-2}}$ for some $k$ in $\ZZ_{2^{r-2}}$.
The corresponding subgroup of $\ZZ_{2^r}^\ast$ is $M=<-1,5^k>$. Define $h$ to be $5^k$.

Then straightforward computations show that an element $(1,Z)$ is in $<(h,X),(-1,Y)>$ if and only if it is in the subgroup of $\ZZ_p^2$ generated by all
\begin{align*}
\left(s_{\alpha_1}(m)-m^{\alpha_1} s_{\alpha_2}(m)+\ldots+(-1)^{e+1} m^{\alpha_1+\ldots+\alpha_{e-1}}s_{\alpha_e}(m)\right)X  \\
+\left(-1+m^{\alpha_1}-m^{\alpha_1+\alpha_2}+ \ldots + (-1)^{e+1} m^{\alpha_1+\ldots+\alpha_{e}}\right)Y
\end{align*}
where the $\alpha_i$ verify $\alpha_1+\ldots +\alpha_e=\alpha$.

The proof then follows by Proposition \ref{p3}.
\end{proof}

In the next section, we show that we can always assume $G$ is a $p$-group. As a result, we understand in detail the subgroup structure of $\ZZ_n$ for any integer $n$ (see Theorem \ref{zn}).

\pagebreak
\section{Subgroup structure of a product}
\label{product}
Let us state some general results, starting with the fundamental theorem of finite abelian groups (see for instance \cite{Lang}):
\begin{theorem}
\label{fund}
Let $G$ be a finite abelian group. Then $G$ is isomorphic to the product
of groups of the form
\[ H_p = \mathbb{Z}_{p^{r_1}} \times \ldots \times \mathbb{Z}_{p^{r_k}} \]
where $p$ is a prime integer and $r_1 \leq \ldots \leq r_k$.
\end{theorem}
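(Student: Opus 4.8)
The plan is to prove this classical structure theorem in two stages: a \emph{primary decomposition} that isolates the prime-power torsion, followed by a decomposition of each primary component into cyclic factors.

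First I would split $G$ according to the primes dividing its order. Writing $|G| = p_1^{a_1}\cdots p_s^{a_s}$, set
\[
G_p = \{ x \in G : p^k x = 0 \text{ for some } k \geq 0 \}
\]
for each prime $p$; each $G_p$ is a subgroup because $G$ is abelian. The decomposition $G = \bigoplus_{p} G_p$ follows from B\'ezout's identity: if $m, m'$ are coprime and $mm'x = 0$, then $1 = um + u'm'$ lets us write $x = u'm'x + umx$ as a sum of an element killed by $m$ and one killed by $m'$, which yields both that the $G_p$ generate $G$ and that the sum is direct. It therefore suffices to decompose each finite abelian $p$-group $G_p$ into cyclic factors.

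The heart of the argument, and the step I expect to be the main obstacle, is a splitting lemma: in a finite abelian $p$-group $P$, if $g$ has maximal order, then $\langle g\rangle$ is a direct summand, $P = \langle g\rangle \oplus H$. Granting this, induction on $|P|$ decomposes $H$ into cyclic factors and hence $P$ as well; collecting and reindexing the resulting exponents in increasing order yields the stated form. To prove the lemma, let $p^r$ be the (maximal) order of $g$. Since $P/\langle g\rangle$ is a nontrivial $p$-group it contains an element of order $p$, that is, some $y \notin \langle g\rangle$ with $py = cg$. Here maximality enters: from $p^r y = 0$ we get $p^{r-1}cg = 0$, forcing $p \mid c$; writing $c = pc'$, the element $z = y - c'g$ then has order exactly $p$ and satisfies $\langle z\rangle \cap \langle g\rangle = 0$. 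Passing to $P/\langle z\rangle$, the image of $g$ still has order $p^r$, so by induction $P/\langle z\rangle = \langle \overline{g}\rangle \oplus \overline{H}$; pulling $\overline{H}$ back to $P$ produces the desired complement $H$, the two required checks (that $\langle g\rangle + H = P$ and $\langle g\rangle \cap H = 0$) being immediate from the direct sum in the quotient.

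Finally, combining the primary decomposition with the cyclic decomposition of each $G_p$ gives $G \cong \prod_p H_p$ with each $H_p = \mathbb{Z}_{p^{r_1}} \times \cdots \times \mathbb{Z}_{p^{r_k}}$ and $r_1 \leq \cdots \leq r_k$, as claimed. Since the theorem asserts only the \emph{existence} of such a decomposition, I would not need the (standard) uniqueness of the exponent multiset, though it could be recorded for completeness.
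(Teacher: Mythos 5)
Your proposal is correct, but it is worth noting that the paper does not prove this statement at all: Theorem \ref{fund} is stated as a classical result and dispatched with a citation to Lang, exactly as the paper later treats Lemma \ref{lang}. What you have written is a genuine, self-contained proof, and it is the standard one: primary decomposition $G = \bigoplus_p G_p$ via B\'ezout, followed by the splitting lemma that a cyclic subgroup $\langle g \rangle$ generated by an element of maximal order $p^r$ in a finite abelian $p$-group is a direct summand, proved by induction through the quotient $P/\langle z\rangle$ with $z$ of order $p$ outside $\langle g\rangle$. The key maneuvers are all in place: maximality of $p^r$ forces $p \mid c$ in $py = cg$, so that $z = y - c'g$ has order $p$ and meets $\langle g\rangle$ trivially; the image $\overline{g}$ retains maximal order in the quotient (since all orders in $P/\langle z\rangle$ still divide $p^r$), so the inductive hypothesis applies; and the two final checks do reduce to the quotient decomposition together with $\langle z\rangle \cap \langle g\rangle = 0$, as you indicate. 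You are also right that only existence is needed here --- the paper never uses uniqueness of the exponents, only the decomposition itself (in Theorem \ref{zn}, via Corollary \ref{Cor1} and Lemma \ref{Lem2}). The trade-off is the usual one: the paper's citation keeps the focus on its actual subject (autotopism groups), while your argument makes the paper self-contained at the cost of a page of standard material.
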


\begin{lemma}
If $H$ and $K$ are finite groups with relatively prime orders, then
$\Aut(H\times K)\cong \Aut(H) \times \Aut(K)$.
\end{lemma}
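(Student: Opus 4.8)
The plan is to exhibit $H$ and $K$ as \emph{characteristic} subgroups of $G=H\times K$ and then show that every automorphism of $G$ splits into a pair acting on the two factors. Identify $H$ with $H\times\{1\}$ and $K$ with $\{1\}\times K$, so that $G=H\times K$ with the two factors commuting and intersecting trivially. The key observation, and the only place the coprimality hypothesis enters, is that the subgroup $H\times\{1\}$ can be described intrinsically as the set of elements whose order divides $|H|$. Indeed, an element $(h,k)$ has order $\operatorname{lcm}(|h|,|k|)$; since $|h|$ divides $|H|$ and $|k|$ divides $|K|$ with $\gcd(|H|,|K|)=1$, the orders $|h|$ and $|k|$ are themselves coprime, so $(h,k)$ has order $|h|\cdot|k|$. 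This order divides $|H|$ precisely when $|k|$ divides $|H|$, which forces $|k|\mid\gcd(|H|,|K|)=1$, i.e. $k=1$. Hence
\[
\{g\in G:\ \operatorname{ord}(g)\mid |H|\}=H\times\{1\},
\]
and symmetrically for $K$.

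Since any automorphism preserves the order of each element, it preserves the set on the left above, so $H\times\{1\}$ is characteristic in $G$, and likewise $\{1\}\times K$. Consequently, for every $\phi\in\Aut(G)$ the restriction $\phi_H=\phi|_{H\times\{1\}}$ is an automorphism of $H$ and $\phi_K=\phi|_{\{1\}\times K\}}$ is an automorphism of $K$. The assignment
\[
\Theta:\Aut(G)\to\Aut(H)\times\Aut(K),\qquad \phi\mapsto(\phi_H,\phi_K)
\]
is readily checked to be a group homomorphism, since restriction to a characteristic subgroup is compatible with composition.

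It remains to verify that $\Theta$ is a bijection, which is routine. For injectivity, if $\phi_H$ and $\phi_K$ are both the identity, then $\phi$ fixes $H\times\{1\}$ and $\{1\}\times K$ pointwise; writing a general element as $(h,k)=(h,1)(1,k)$ and using that $\phi$ is a homomorphism gives $\phi(h,k)=(h,k)$, so $\phi=\id$. For surjectivity, given $(\psi,\chi)\in\Aut(H)\times\Aut(K)$ the map $(h,k)\mapsto(\psi(h),\chi(k))$ is plainly an automorphism of $G$ whose image under $\Theta$ is $(\psi,\chi)$. The main (and really the only) obstacle is the characteristic-subgroup step: without coprime orders the order characterization fails, and an automorphism of $G$ may genuinely mix the two factors (as already happens for $H=K=\ZZ_2$), so the decomposition cannot be expected in general. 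Everything else is formal.
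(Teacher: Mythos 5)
Your proof is correct, but it is worth noting that the paper does not actually prove this lemma at all: it simply cites the literature (the reference \cite{07} on automorphisms of finite abelian groups) and moves on. So your argument is a genuine addition rather than a rederivation. Your route is the standard self-contained one: you characterize $H\times\{1\}$ intrinsically as $\{g\in G:\ \operatorname{ord}(g)\ \text{divides}\ |H|\}$, using that coprimality of $|H|$ and $|K|$ forces the order of $(h,k)$ to be $|h|\,|k|$, conclude that both factors are characteristic, and then split any automorphism into its two restrictions; the homomorphism, injectivity and surjectivity checks are all done correctly. What your approach buys is twofold: it makes the paper self-contained on this point, and it isolates exactly where coprimality is used, which in turn explains why the statement fails without it (your example $H=K=\ZZ_2$ is apt, since $\Aut(\ZZ_2\times\ZZ_2)$ has order $6$ while $\Aut(\ZZ_2)\times\Aut(\ZZ_2)$ is trivial). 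What the paper's citation buys is brevity, and the cited source also handles the harder non-coprime abelian case, which is needed elsewhere in that reference but not here. One cosmetic remark: your displayed definition of $\phi_K$ contains a stray closing brace ($\phi|_{\{1\}\times K\}}$ should read $\phi|_{\{1\}\times K}$), and the phrase ``precisely when $|k|$ divides $|H|$'' is slightly loose, though the implication you actually need (that $|h|\,|k|$ dividing $|H|$ forces $|k|$ to divide $|H|$, hence $|k|=1$) is immediate.
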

\begin{proof}
see \cite{07}
\end{proof}
\begin{corollary}
\label{Cor1}
If $H$ and $K$ are finite groups with relatively prime orders, then
$\Atp(H\times K)\cong \Atp(H) \times \Atp(K)$.
\end{corollary}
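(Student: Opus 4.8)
The plan is to reduce everything to the semidirect-product description of $\Atp$ established earlier. Writing $G = H \times K$, Proposition \ref{proposition} (or, in the non-abelian case, the group instance of Proposition \ref{p2} recorded in the Remark) identifies $\Atp(G)$ with the set $\Aut(G) \times G^2$ carrying the multiplication
\[
(h,x_0,y_0)(h',x_1,y_1)=(hh',\, x_0\,h(x_1),\,x_0^{-1}h(y_1)x_0y_0).
\]
I would first rewrite each of the three ingredients $\Aut(G)$, $G^2$, and this multiplication in a form that manifestly splits along the two factors $H$ and $K$.

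For the automorphism group I would invoke the preceding Lemma: since $\gcd(|H|,|K|)=1$, we have $\Aut(G)\cong\Aut(H)\times\Aut(K)$, and the point I want to emphasize is that this isomorphism is realized by the \emph{component-wise} action. Indeed coprimality forces $H\times 1$ and $1\times K$ to be characteristic subgroups of $G$ (they are exactly the sets of elements whose order divides $|H|$, resp.\ $|K|$), so every $h\in\Aut(G)$ preserves each factor and decomposes as $h=(h_H,h_K)$ acting diagonally. Similarly $G^2 = (H\times K)^2 \cong H^2\times K^2$, and I would write a typical pair as $x_0=(a,a')$, $y_0=(b,b')$ with $a,b\in H$ and $a',b'\in K$.

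With these identifications in place the verification becomes purely formal: substituting $h=(h_H,h_K)$ and the split coordinates into the multiplication above, each of the three output slots factors component-wise, namely $hh'=(h_Hh_H',\,h_Kh_K')$, then $x_0h(x_1)=(a\,h_H(c),\,a'\,h_K(c'))$, and likewise for the third slot, with no cross terms between the $H$- and $K$-coordinates. Hence the map sending $(h,x_0,y_0)$ to the pair $((h_H,a,b),(h_K,a',b'))$ is an isomorphism from $\Atp(G)$ onto $\Atp(H)\times\Atp(K)$; in the abelian case the multiplication collapses to the law $(h,X)(h',X')=(hh',hX'+X)$ of Proposition \ref{proposition} and the same bookkeeping applies verbatim.

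The only genuine content here is the diagonality of the $\Aut(G)$-action, which is precisely where the coprimality hypothesis enters; everything afterwards is a routine check that the semidirect-product law is compatible with the direct-product decomposition. I therefore expect the main (and essentially only) obstacle to be justifying that step cleanly --- that no ``off-diagonal'' automorphism can mix the two factors --- after which the corollary drops out by a one-line substitution.
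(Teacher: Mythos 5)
Your proof is correct and follows essentially the same route as the paper: identify $\Atp(H\times K)$ with $\Aut(H\times K)\ltimes (H\times K)^2$, apply the lemma $\Aut(H\times K)\cong \Aut(H)\times\Aut(K)$, and re-bracket the semidirect product of products as the product of semidirect products $(\Aut(H)\ltimes H^2)\times(\Aut(K)\ltimes K^2)$. The only difference is one of detail: you justify explicitly (via the characteristic-subgroup/order argument) that the $\Aut$-isomorphism acts diagonally, so that the re-bracketing step is legitimate, whereas the paper's four-line chain of isomorphisms leaves that compatibility implicit.
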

\begin{proof}
\begin{align*}
\Atp(H\times K) &\cong \Aut(H\times K) \ltimes (H\times K)^2 \\
&\cong (\Aut(H) \times \Aut(K)) \ltimes (H^2 \times K^2) \\
&\cong (\Aut(H) \ltimes H^2)\times (\Aut(K) \ltimes K^2)\\
&\cong \Atp(H) \times \Atp(K).
\end{align*}
\end{proof}

Thus, $\Atp(G)$ is the direct product of all $\Atp(H_p)$. In particular, since $\Atp(H_p)\cong \Aut(H_p) \ltimes {H_p}^2$, there is a nice description of $\Atp(G)$, using the description of $\Aut(H_p)$ provided in \cite{07}.

\begin{lemma}
\label{Lem2}
If $H$ and $K$ are finite groups with relatively prime orders, then any subgroup of $H \times K$ is the product of a subgroup of $H$ with a subgroup of $K$.
\end{lemma}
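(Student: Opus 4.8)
The plan is to exhibit the two ``coordinate'' subgroups directly and show they multiply back to $S$. Given a subgroup $S \le H \times K$, I would set
\[
S_H = \{h \in H : (h,1) \in S\}, \qquad S_K = \{k \in K : (1,k) \in S\}.
\]
These are clearly subgroups of $H$ and $K$ respectively, and the inclusion $S_H \times S_K \subseteq S$ is immediate: if $(h,1)$ and $(1,k)$ both lie in $S$, then so does their product $(h,k)$. So the whole content of the lemma sits in the reverse inclusion.

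The substantial step is to show $S \subseteq S_H \times S_K$, i.e. that whenever $(h,k) \in S$ one already has $(h,1) \in S$ and $(1,k) \in S$ \emph{separately}. This is where coprimality enters. Writing $m = |H|$ and $n = |K|$, so that $\gcd(m,n) = 1$, I would pick Bézout coefficients $a,b \in \ZZ$ with $am + bn = 1$. For any $(h,k) \in S$ we have $h^m = 1$ and $k^n = 1$, and the idea is that a suitable power of $(h,k)$ annihilates one coordinate while preserving the other. Explicitly, $(h,k)^{nb}$ has second coordinate $k^{nb}=1$ (since $n \mid nb$) and first coordinate $h^{nb}=h$ (since $nb \equiv 1 \pmod m$), so $(h,1)\in S$; symmetrically $(h,k)^{am}=(1,k)\in S$. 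Hence $h \in S_H$ and $k \in S_K$, giving $(h,k)\in S_H\times S_K$.

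Combining the two inclusions yields $S = S_H \times S_K$, as claimed. The only delicate point is the coprimality argument, and even that is routine once the Bézout relation is in hand; note that abelianness of $H$ or $K$ is never used, so the statement holds for arbitrary finite groups of coprime order. Conceptually this is the degenerate case of Goursat's lemma: any common quotient of $H$ and $K$ would have order dividing $\gcd(m,n)=1$, forcing the ``amalgamating'' part of a subgroup of $H\times K$ to be trivial and leaving only honest direct products.
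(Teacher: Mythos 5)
Your proof is correct, and it takes a genuinely different route from the paper's. Both proofs introduce the same coordinate subgroups (the paper's $M_H$, $M_K$ are your $S_H$, $S_K$) and both note the easy inclusion $S_H\times S_K\subseteq S$; they diverge on the reverse inclusion. You work element by element: with $am+bn=1$, the powers $(h,k)^{nb}=(h,1)$ and $(h,k)^{am}=(1,k)$ lie in $S$, so every element of $S$ splits into its two coordinates inside $S$. The paper instead counts: it shows each nonempty fibre $\{h:(h,k)\in S\}$ is a coset of $M_H$, so $|S|=|M_H|\cdot|\pi_K(S)|$ where $\pi_K(S)$ is the projection of $S$ to $K$; since $|M_H|$ divides $|H|$, $|\pi_K(S)|$ divides $|K|$, and $\gcd(|H|,|K|)=1$, the factorization of $|S|$ into a part dividing $|H|$ and a part dividing $|K|$ is unique, and comparing with the symmetric factorization $|S|=|M_K|\cdot|\pi_H(S)|$ gives $|S|=|M_H||M_K|$, which together with the easy inclusion forces equality. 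Your B\'ezout argument is shorter, avoids the slightly delicate uniqueness-of-coprime-factorization step, and yields the extra information that the projections of $S$ coincide with $S_H$ and $S_K$; like the paper's proof, it never uses commutativity of either factor, as you point out. What the paper's method buys is mainly thematic consistency: the fibre-is-a-coset observation is exactly the mechanism the author reuses in the proof of Proposition \ref{p3}. Your closing remark about Goursat's lemma is accurate as conceptual context but does no work in the argument.
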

\begin{proof}
This lemma is considered to be folklore, we prove it anyway (we do not pretend the proof is new).

Let $S$ be a subgroup of $H\times K$. Define $M_H \leq H$ and $M_K \leq K$ to be
\[
\begin{cases}
 M_H=\{ h\in H ;\; (h,1)\in S \} \\
 M_K=\{ k\in K ;\; (1,k)\in S \}
\end{cases}
\]

We have of course $M_H\times M_K\subset S$. We want to show that $|S|=|M_H| |M_K|$. Then the lemma follows clearly.

To prove this, notice that if for a fixed $k$ in $K$, $(h,k)$ and $(h',1)$ are in $S$, then
\[ (hh',k)=(h,k)(h',1)\in S\]

Moreover, if $(h,k)$ and $(h',k)$ are in $S$ then
\[ (hh'^{-1},1)=(h,k)(h',k)^{-1}\in S\]

Thus, 
\[ \{h\in H;\; (h,k)\in S\}\]
is always a coset of $M_H$. Therefore, 
\[ |S|=|M_H| |\{k\in K;\;(h,k)\in S \text{ for some }h\}| \]

Now, $\{k\in K;\;(h,k)\in S \text{ for some }h\}$ is a subgroup of $K$, so its order divides $|K|$.
Similarly, $|M_H|$ divides $|H|$ and $|S|$ divides $|H||K|$, say $|S|=mn$, $m$ dividing $|H|$, $n$ dividing $|K|$.
But then, $|H|$ and $|K|$ being relatively prime, we must have $m=|M_H|$.
Reasoning with $M_K$ instead of $M_H$, we see that we must also have $n=|M_K|$, so $|S|=|M_H| |M_K|$ and we are done.
\end{proof}

Combining Corollary \ref{Cor1} and Lemma \ref{Lem2}, we get
\begin{theorem}
\label{zn}
Let $G= \mathbb{Z}_{n}$, $n>0$. Let $n=p_1^{r_1} \ldots p_k^{r_k}$ be the prime decomposition of $n$. Define $G_i= \mathbb{Z}_{p_i^{r_i}}$ for all $i$. Then 
\begin{align*}
\Atp(G) & \cong \Atp(\mathbb{Z}_{p_1^{r_1}})\times \ldots \times \Atp(\mathbb{Z}_{p_k^{r_k}}) \\
    & \cong (G_1^\ast \ltimes G_1^2)\times \ldots \times (G_k^\ast \ltimes G_k^2).
\end{align*}
With this identification, any subgroup $S$ of $\Atp(G)$ is a product $S=S_1\times \ldots\times S_k$ of subgroups $S_i$ of $\Atp(G_i)$. Moreover, $S_i$ is either of the form
\[
S_i= \{ (h^k,s_{k}(h)X+Z);\; k \in \{1 \ldots |h|\},\, Z \in Q \}
\]
where $h\in G_i^\ast$, $X\in G_i^2$ and $Q$ is some subgroup of $G_i^2$ containing $s_{|h|}(h)X$, or, this latter case only happening when $G_i=\mathbb{Z}_{2^r}$, $r\geq3$,
\[
S_i=\{ (-1)^\epsilon h^k,s_k(h)X+ \epsilon h^k Y+Z;\;\epsilon\in\{0,1\}, k\in \{1\ldots \alpha\}, Z\in Q\}
\]
for $h$ some power of 5, $X\in G_i^2$ and $Q$ some subgroup of $G_i^2$ containing all the elements
\begin{align*}
\left(s_{\alpha_1}(h)-h^{\alpha_1} s_{\alpha_2}(h)+\ldots+(-1)^{e+1} h^{\alpha_1+\ldots+\alpha_{e-1}}s_{\alpha_e}(h)\right)X  \\
+\left(-1+h^{\alpha_1}-h^{\alpha_1+\alpha_2}+ \ldots + (-1)^{e+1} h^{\alpha_1+\ldots+\alpha_{e}}\right)Y
\end{align*}
where the $0 <\alpha_i<|(h,X)|$ verify $\alpha_1+\ldots +\alpha_e=|(h,X)|$.
\end{theorem}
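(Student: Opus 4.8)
The plan is to build the statement from the structural results already established, separating three tasks: the global isomorphism, the normal form of a subgroup of each local factor, and the decomposition of an arbitrary subgroup into local pieces. First I would apply the Chinese Remainder Theorem to write $G=\mathbb{Z}_n\cong G_1\times\cdots\times G_k$ with $G_i=\mathbb{Z}_{p_i^{r_i}}$, so that the factors have pairwise coprime orders. Since $|G_1|$ is coprime to $|G_2\times\cdots\times G_k|$, Corollary \ref{Cor1} splits off $\Atp(G_1)$, and an easy induction on $k$ gives $\Atp(G)\cong\Atp(G_1)\times\cdots\times\Atp(G_k)$. Proposition \ref{proposition} then identifies each factor with $G_i^\ast\ltimes G_i^2$, where $G_i^\ast=\Aut(G_i)=\mathbb{Z}_{p_i^{r_i}}^\ast$. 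This settles the two displayed isomorphisms.

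Next I would fix the shape of a subgroup $S_i\leq\Atp(G_i)$ by splitting on Lemma \ref{lang}. When $p_i$ is odd, or $p_i=2$ with $r_i\leq 2$, the group $\Aut(G_i)$ is cyclic, and Example \ref{neq2} gives precisely the first displayed form, with $h$ a generator of the cyclic image, $X\in G_i^2$, and $Q\leq G_i^2$ containing $s_{|h|}(h)X$. When $p_i=2$ and $r_i\geq 3$, Lemma \ref{lang} gives $\Aut(G_i)\cong\mathbb{Z}_2\times\mathbb{Z}_{2^{r_i-2}}$, and Example \ref{eq2} supplies the second displayed form, with $h$ a power of $5$ and $Q$ generated by the alternating sums in $X$ and $Y$. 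Thus, \emph{provided} $S$ already splits as $S=S_1\times\cdots\times S_k$, each factor automatically falls into one of the two advertised shapes, and nothing further is needed.

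The hard part, and the step I expect to be the real obstacle, is the splitting $S=S_1\times\cdots\times S_k$ itself. The natural route is to iterate Lemma \ref{Lem2} across the factors of $\Atp(G)\cong\prod_i\Atp(G_i)$, exactly as the sentence preceding the theorem suggests. But Lemma \ref{Lem2} requires the factors to have \emph{coprime} orders, and that hypothesis is not met here: one computes $|\Atp(G_i)|=p_i^{2r_i}\,\varphi(p_i^{r_i})=p_i^{3r_i-1}(p_i-1)$, and the factor $p_i-1$ may share a prime with some $p_j$, so $\gcd(|\Atp(G_i)|,|\Atp(G_j)|)$ need not be $1$. In that situation the decomposition can genuinely fail. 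For example, in $\Atp(\mathbb{Z}_6)\cong\Atp(\mathbb{Z}_2)\times\Atp(\mathbb{Z}_3)$ the factor orders are $4$ and $18$, sharing the prime $2$; pairing the order-$2$ element $(1,(1,0))$ of the first factor diagonally with the order-$2$ element $(-1,(0,0))$ of the second produces a cyclic subgroup of order $2$ whose projections to both factors are nontrivial, so it cannot be written as $S_1\times S_2$. I would therefore flag this decomposition as the crux: the proof sketched above is sound only when the local factors $\Atp(G_i)$ happen to have pairwise coprime orders, and for the remaining $n$ the clean product description must be weakened to a subdirect (Goursat-type) one, or the class of subgroups must be restricted.
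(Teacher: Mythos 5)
Your proposal retraces the paper's own proof exactly where that proof is sound: the paper's argument is precisely ``combine Theorem \ref{fund}, Corollary \ref{Cor1} and Proposition \ref{proposition}'' for the two displayed isomorphisms, and then ``conclude using Lemma \ref{Lem2} and Examples \ref{neq2} and \ref{eq2}'' for the subgroup description. Your treatment of the isomorphisms and of the two local normal forms (splitting on Lemma \ref{lang} according to whether $\Aut(G_i)$ is cyclic) is the same as the paper's, so up to the splitting step the two arguments coincide.

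The obstruction you flag at the splitting step is not a gap in your reasoning; it is a genuine error in the paper, and your diagnosis is exactly right. Lemma \ref{Lem2} requires the direct factors to have relatively prime orders, but $|\Atp(G_i)|=p_i^{3r_i-1}(p_i-1)$, and since $p-1$ is even for every odd prime $p$, any two of these orders share the factor $2$ whenever $n$ has at least two distinct prime divisors; so the lemma is never applicable outside the prime-power case, where the splitting claim is vacuous. Moreover your $\ZZ_6$ example shows that the conclusion itself fails, not merely the proof: your diagonal element is, under Proposition \ref{proposition}, the autotopism $x\mapsto -x+3$, $y\mapsto -y$, $z\mapsto -z+3$ of $\ZZ_6$, i.e.\ the element $(-1,(3,0))\in\Aut(\ZZ_6)\ltimes\ZZ_6^2$, which squares to the identity; the order-$2$ subgroup $S$ it generates projects onto the nontrivial subgroups $\{1,(1,(1,0))\}\leq\Atp(\ZZ_2)$ and $\{1,(-1,(0,0))\}\leq\Atp(\ZZ_3)$, so if $S$ were a product $S_1\times S_2$ it would have order $4$, a contradiction. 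Hence the product decomposition of subgroups asserted in Theorem \ref{zn} is false for every $n$ that is not a prime power, and the statement must be repaired along the lines you indicate: either replace the product description by a Goursat-type subdirect description of subgroups of $\prod_i\Atp(G_i)$, or restrict to subgroups whose projections to the factors have pairwise coprime orders (for which the product decomposition does hold).
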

\begin{proof}
Combine Theorem \ref{fund}, Corollary \ref{Cor1} and Proposition \ref{proposition} to prove that 
\[
\Atp(G) \cong (G_1^\ast \ltimes G_1^2)\times \ldots \times (G_k^\ast \ltimes G_k^2).
\]
Then conclude using Lemma \ref{Lem2} and Examples \ref{neq2} and \ref{eq2}. 
\end{proof}


\pagebreak
\section{Possible applications}
\label{generalization}
As highlighted in the introduction, every loop $L$ isotopic to a group $G$ is in fact isomorphic to it (see \cite{Pflu}).
Yet, understanding the subgroup structure of $\Atp(\ZZ_p)$, for $p$ prime, is a key ingredient in counting the number of latin squares of some type, as we wish to explain here.

The definition of quasigroups was provided in Section 2.2. Alternatively, they can be defined as groupoids $(G,\cdot)$ such that for any $a$, $b$ there are unique elements $c$, $d$ satisfying
\[
a \cdot c = b
\]
\[
d \cdot a = b
\]

We see immediately that Cayley tables of finite quasigroups are exactly latin squares, and Cayley tables of finite loops are exactly reduced latin squares. Sorting and enumerating quasigroups (or, equivalently, latin squares) is a major issue in latin square theory. One can choose to sort these up to isomorphy, but isotopy is often more suited. For instance, the fact that any quasigroup is isotopic to a loop (see \cite{Pflu}) can be stated in the form that any latin square can be put into a reduced form.

We reproduce Table 2 from \cite{Pflu}, comparing the number of classes of loops of order 1 to 7 up to isomorphy and isotopy. 
\begin{table}[h]
\begin{center}
\begin{tabular}{|l|l|l|l|l|l|l|l|}
\hline
order of loop				&1	&2	&3	&4	&5	&6	&7		\\ \hline
number of isomorphy classes	&1	&1	&1	&2	&6	&109&23,750	\\ \hline
number of isotopy classes	&1	&1	&1	&2	&2	&22	&563	\\ \hline
\end{tabular}
\end{center}
\caption{Number of classes of loops of order 1 to 7 up to isomorphy and isotopy.}
\end{table}

Now that we have recalled these facts about the importance of isotopy in latin square theory, applications of the present paper are easy to imagine: if we can enumerate finite quasigroups (resp. loops) of some kind, up to isomorphy, by making an essential use of the group of automorphisms of some abelian groups, then we can hope to mimic the proof leading to such an enumeration, and enumerate these quasigroups (resp. loops) up to isotopy instead. We are specifically interested in Theorem 7.3. of \cite{Petr} computing the number of nilpotent loops of order $2p$, $p$ prime. We claim that most of it can be translated for isotopy, and that Example \ref{zp} can be used in an essential way to find the number of nilpotent loops of order $2p$ \textit{up to isotopy}, instead of isomorphy.

\bibliography{biblio}		
\bibliographystyle{alpha} 
\end{document}